\numberwithin{figure}{section}
\theoremstyle{plain}
\newtheorem{thm}{\protect\theoremname}[section]
  \theoremstyle{plain}
  \newtheorem{prop}[thm]{\protect\propositionname}
  \theoremstyle{remark}
  \newtheorem{rem}[thm]{\protect\remarkname}
  \theoremstyle{plain}
  \newtheorem{lem}[thm]{\protect\lemmaname}
\renewcommand\equationautorefname{\@gobble}
\numberwithin{equation}{section}
\definecolor{Red}{rgb}{1,0,0}
\definecolor{Blue}{rgb}{0,0,1}
\title{Power Lower Bounds for the Central Moments of the Last Passage Time for Directed Percolation in a Thin Rectangle}
\author{Christian Houdr\'{e}\thanks{School of Mathematics, Georgia Institute of Technology, Atlanta, Georgia 30332-0160. {\tt Email:\,houdre@math.gatech.edu}. Research supported in part by the grants \# 246283 and \# 524678 from the Simons Foundation.} \and Chen Xu \thanks{School of Mathematics, Georgia Institute of Technology, Atlanta, Georgia 30332-0160. {\tt Email:\,cxu60@math.gatech.edu}.}}
  \providecommand{\lemmaname}{Lemma}
  \providecommand{\propositionname}{Proposition}
  \providecommand{\remarkname}{Remark}
\providecommand{\theoremname}{Theorem}
\begin{document}
\maketitle

\makeatletter \def\blfootnote{\gdef\@thefnmark{}\@footnotetext} \makeatother

\blfootnote{MSC2010: Primary 60K35, 82B43.}

\blfootnote{Keywords: Last-passage percolation, longitudinal fluctuation.}
\begin{abstract}
In directed last passage site percolation with i.i.d.~random weights
with finite support over a $n\times\lfloor n^{\alpha}\rfloor$ grid,
we prove that for $n$ large enough, the order of the $r$-th central
moment, $1\le r<+\infty$, of the last passage time is lower bounded
by $n^{r(1-\alpha)/2}$, $0<\alpha<1/3$.
\end{abstract}

\section{Introduction and statements of results}

Longitudinal/shape fluctuations, i.e., the standard deviation of first/last
passage time, has attracted a lot of attention in the study of percolation
systems. It is conjectured that, on a two dimensional $n\times n$
grid, the fluctuation should be of order $n^{1/3}$ in undirected/directed
first/last passage percolation, with various weight distributions
satisfying moment conditions. However, this result has only been proved
under exponential or geometric weights, e.g., see \cite{johansson2000shape,johansson2000transversal,baik2001optimal}.
For general weight distributions satisfying moments conditions, to
date, only a upper bound of sublinear order $O(\sqrt{n/\ln n})$ (see
\cite{kesten1986aspects,kesten1993speed,benaim2008exponential,benjamini2011first,damron2014subdiffusive})
and a lower bound of order $o(\sqrt{\ln n})$ (see \cite{pemantle1994planar,newman1995divergence,zhang2008shape,auffinger2013differentiability})
have been proved in first passage percolation in various dimensions.
More is known for the directed last passage time ($DLPP$) in a thin
rectangular lattice where, via a coupling to Brownian directed percolation,
it has been shown, in \cite{bodineau2005universality}, with proper
renormalization, to converge to the Tracy-Widom distribution. Recently,
a general method to prove lower bounds for variances is devised in
\cite{chatterjee2017general}. It is applicable to first passage percolation
with continuous weights, providing a lower bound of order $o(\sqrt{\ln n})$
for the fluctuation. For a list of other results on these topics,
we refer the interested reader to the recent comprehensive survey
\cite{auffinger201550}.

In a related topic, i.e., the study of the length of the longest common
subsequences ($LCSs$) in random words, this fluctuation has also
been longed for. It is well known that $LCSs$ can be viewed as a
directed last passage percolation problem with random but dependent
weights. In \cite{lember2009standard}, the variance of the length
of $LCSs$ is shown to be linear when the letters are drawn from a
highly concentrated Bernoulli distribution. This method is further
developed in \cite{HoudreLCSVARLB2012} to show that the $r$-th moment
of $LCSs$ is of order $\Theta(n^{r/2})$ under a similarly concentrated
distribution over some finite dictionary. This power lower bound on
the fluctuation is essential in proving a Gaussian limiting law for
the length of $LCSs$. (See \cite{HoudreCLT2014})

The present paper aims at studying the $r$-th, $1\le r<+\infty$,
central moments of $DLPP$ in a thin rectangular $n\times\lfloor n^{\alpha}\rfloor$
grid. These are shown to be lower-bounded by $n^{r(1-\alpha)/2}$,
for $0<\alpha<1/3$, when $n$ is large enough. (For $r=1$, results
on the first order central moments are very sparse in the percolation
literature.) Moreover, our methodology is also applicable to first
passage time for directed site/edge percolation.

Hereafter, for convenience, $n^{\alpha}$ will be short for $\lfloor n^{\alpha}\rfloor$.
Next, the model under study is specified as follows: we consider a
$n\times n^{\alpha}$ grid having $n^{1+\alpha}$ vertices, each of
which is associated with i.i.d.~random weights $w$. We require the
weight distribution to be non-degenerate and to have finite non-negative
support, i.e., its $c.d.f.$~$F$ is such that $F(0-)=0$ and such
that there exists $C>0$ with $F(C)=1$. Then, in this setting, the
last passage time $L_{n}$ is the maximum of the sums over all the
weights, along all the unit-step up-right paths on the grid, from
$(1,1)$ to $(n,n^{\alpha})$. Namely, 
\[
L_{n}=\max_{v\in\Pi}\sum_{v\in\pi}w(v),
\]
where $\Pi$ is the set of all unit-step up-right paths from $(1,1)$
to $(n,n^{\alpha})$, and where any path $\pi\in\Pi$ is an ordered
set of vertices, i.e., $\pi=\{v_{1}=(1,1),v_{2},...,v_{n+n^{\alpha}-1}=(n,n^{\alpha})\}$
such that $v_{i+1}-v_{i}$, $i\in[n_{1}+n_{2}-1]=\{1,2,...,n_{1}+n_{2}-1\}$,
is either $\textbf{e}_{1}:=(1,0)$ or $\textbf{e}_{2}:=(0,1)$ and
where $w:\ v\rightarrow w(v)\in\mathbb{R}$ is the random weight associated
with the vertex $v\in[n]\times[n^{\alpha}]$, where $[n]:=\{1,2,...,n\}$.
Hereafter, \textit{directed path} is short for such type of path.
Further, any directed path realizing the last passage time is called
a \textit{geodesic}. Within this framework, our main result is:
\begin{thm}
\label{thm:main}The $r$-th central moment of the directed last passage
time in site percolation over a $n\times n^{\alpha}$, $0<\alpha<1/3$,
grid is lower-bounded, of order $n^{r(1-\alpha)/2}$, i.e., for $1\le r<+\infty$,
\begin{align*}
\mathbb{M}_{r}\left(L_{n}\right) & =\mathbb{E}\left(\left\lvert L_{n}-\mathbb{E}L_{n}\right\rvert ^{r}\right)\ge c_{0}n^{\frac{r(1-\alpha)}{2}},
\end{align*}
where $c_{0}>0$ is a constant which depends on $r$ but is independent
of $n$.
\end{thm}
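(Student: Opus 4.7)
The plan is to reduce the $r$-th central moment bound to a lower bound on the variance, $\mathrm{Var}(L_n) \ge c\,n^{1-\alpha}$, and then prove the variance bound via a block decomposition combined with a covariance estimate. For $r \ge 2$, I would write $L_n - \mathbb{E} L_n = \sum_v D_v$ as a martingale sum for a cell-by-cell filtration; Burkholder--Davis--Gundy gives $\mathbb{E}|L_n - \mathbb{E} L_n|^r \gtrsim \mathbb{E}(\sum_v D_v^2)^{r/2}$, and Jensen's inequality (convexity of $x \mapsto x^{r/2}$) then yields $\mathbb{E}|L_n - \mathbb{E}L_n|^r \gtrsim \mathrm{Var}(L_n)^{r/2}$. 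For $1 \le r < 2$, I would interpolate using the log-convexity of $s \mapsto \log\mathbb{E}|L_n - \mathbb{E}L_n|^s$ against a matching upper moment bound at some $s > 2$, obtainable from Rosenthal's inequality for the martingale differences together with a concentration estimate in the thin rectangle regime $\alpha < 1/3$.

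For the variance, partition the grid into $K = \lfloor n^{1-\alpha} \rfloor$ disjoint vertical blocks $B_1, \dots, B_K$, each of horizontal width $n^\alpha$, and let $S_k = \sum_{v \in B_k} w(v)$. Because the $S_k$ are functions of disjoint i.i.d.\ weights, they are independent with $\mathrm{Var}(S_k) \asymp n^{2\alpha}$, and the multivariate (best-linear-predictor) regression inequality gives
\[
\mathrm{Var}(L_n) \;\ge\; \sum_{k=1}^K \frac{\mathrm{Cov}(L_n, S_k)^2}{\mathrm{Var}(S_k)}.
\]
The key step is the per-block estimate $\mathrm{Cov}(L_n, S_k) \ge c\,n^\alpha$. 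I would expand $\mathrm{Cov}(L_n, S_k) = \sum_{u \in B_k} \mathrm{Cov}(g_u(w(u)), w(u))$ with $g_u(x) := \mathbb{E}[L_n \mid w(u) = x]$ non-decreasing, $1$-Lipschitz, and convex in $x$ (so $g_u'(x) = \mathbb{P}(u \in \mathrm{geodesic} \mid w(u) = x)$), apply a standard covariance identity for monotone functions, and use the geometric fact $\sum_{u \in B_k} \mathbb{P}(u \in \mathrm{geodesic}) \ge n^\alpha$ (every directed path makes exactly $n^\alpha$ horizontal steps within a block of width $n^\alpha$). Substituting then gives $\mathrm{Var}(L_n) \gtrsim K(n^\alpha)^2/n^{2\alpha} = K = n^{1-\alpha}$.

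The principal obstacle is the per-cell covariance bound $\mathrm{Cov}(g_u(W), W) \gtrsim \mathbb{P}(u \in \mathrm{geodesic})$: for cells $u$ where $g_u$ transitions only near an extremum of the weight support, the naive bound degrades to the square of this probability, and recovering the linear bound requires careful use of the convexity and monotonicity of $g_u$ together with the non-degeneracy of the weight distribution on $[0,C]$. A secondary difficulty is the $r < 2$ interpolation step, where the restriction $\alpha < 1/3$ plausibly enters through a Gaussian-type concentration estimate for $L_n$ needed to produce a matching upper moment bound of order $n^{s(1-\alpha)/2}$.
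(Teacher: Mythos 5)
Your proposal takes a genuinely different route from the paper, but two of its steps contain real gaps that, as stated, cannot be filled.

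\textbf{The interpolation for $1 \le r < 2$ is based on a false premise.} You propose to combine the variance lower bound $\mathrm{Var}(L_n) \gtrsim n^{1-\alpha}$ with an upper moment bound $\mathbb{E}|L_n - \mathbb{E}L_n|^s \lesssim n^{s(1-\alpha)/2}$ for some $s>2$ via log-convexity of moments. But such an upper bound would force $\mathrm{Var}(L_n) \lesssim n^{1-\alpha}$ by Lyapunov, and this contradicts the actual fluctuation scale in the thin-rectangle regime: by the coupling to Brownian directed percolation (Bodineau--Martin), the fluctuation is of order $n^{1/2}n^{-\alpha/6} = n^{1/2 - \alpha/6}$, so $\mathrm{Var}(L_n) \asymp n^{1-\alpha/3} \gg n^{1-\alpha}$. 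Running the interpolation with the true exponent $\gamma := 1-\alpha/3 > \beta := 1-\alpha$ gives, writing $\theta = (s-2)/(s-r)$, the lower-bound exponent $(\beta-\gamma)/\theta + \gamma r/2$, which is strictly less than $\beta r/2$ for every finite $s$ whenever $r < 2$. So the $r<2$ case cannot be extracted this way; it needs a direct argument, which is exactly what the paper's construction provides.

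\textbf{The per-cell covariance bound does not hold.} You correctly identify $\mathrm{Cov}(g_u(W),W) \gtrsim \mathbb{P}(u \in \text{geodesic})$ as the principal obstacle, but the proposed rescue via convexity and monotonicity of $g_u$ cannot work. Convexity only says $g_u'$ is nondecreasing, which is precisely the situation where $g_u'$ may be entirely concentrated near the right endpoint $C$ of the weight support, and there the covariance degrades quadratically. Concretely, with $W$ uniform on $[0,1]$ and $g'(t)=\mathbf{1}_{\{t>1-\epsilon\}}$ one has $\mathbb{E}g'(W)=\epsilon$ but
\[
\mathrm{Cov}(g(W),W) = \int_0^1 g'(t)\,\mathrm{Cov}(\mathbf{1}_{\{W>t\}},W)\,dt = \int_{1-\epsilon}^1 \tfrac{t(1-t)}{2}\,dt \asymp \epsilon^2,
\]
so the linear bound fails by a factor $\epsilon$. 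For cells $u$ where $\mathbb{P}(u\in\text{geodesic})$ is small, this is exactly the dangerous regime, and nothing in the structure of $L_n$ rules it out. Incidentally, the paper's way around this is the $hi/lo$-mode thresholding of the weights: by reducing to a Bernoulli indicator $\mathbf{1}(w>m)$, the endpoint degeneracy is eliminated, and the ``slope'' is then controlled by counting $lo$-mode cells on the geodesic (at least $n+n^\alpha-1 - M_n$ of them, and $M_n < c_1 n$ with high probability). This thresholding is precisely the missing ingredient in your covariance step.

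A further mismatch: in your block-and-regression argument nothing obviously requires $\alpha<1/3$, which is a warning sign. In the paper's proof this restriction comes from the martingale (Hoeffding) concentration for the reversed-Lipschitz increments, which degrades as the rectangle widens; your outline attributes it vaguely to ``a Gaussian-type concentration estimate'' that, as noted above, does not hold at the claimed scale.
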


The remaining of this paper is dedicated to the proof of the above
theorem and is organized as follows: at the beginning of the next
section, we show that with high probability the number of $hi$-mode
weights (to be defined) on any geodesic grows at most linearly in
$n$. More importantly, this indicates that there exist at least linearly
many $lo$-mode weights on any geodesic. In turn, this helps showing
that if $L_{n}$ is represented as a random function of the number
of $lo$-mode weights over the grid, then with high probability it
locally satisfies a reversed Lipschitz condition. In Section \ref{Sec:mainproof},
the proof of the main theorem is completed by showing how{\small{}
such a }local reversed Lipschitz condition ensures a power lower bound
for any central moment. In the concluding section, we briefly discuss
the potential extension of our proof to the case of the second order
central moment, i.e., the variance over a square grid, i.e., $\alpha=1$.

\section{Preliminaries\label{sec:Preliminaries}}

We start by introducing the notions of $hi/lo$ mode of site weights:
since the weight distribution is non-degenerate and non-negative,
there exists $m>0$ such that $\mathbb{P}(w>m)=p>0$ and $\mathbb{P}(w\le m)=1-p>0$.
Then, $w$ is said to be in $hi$ mode if $w>m$; otherwise, $w$
is in $lo$ mode. In addition, let $M_{n}$ be the maximum of the
number of weights in $hi$ mode over all directed paths:
\[
M_{n}=\max_{v\in\Pi}\sum_{v\in\pi}\boldsymbol{1}\left(w(v)>m\right),
\]
which is the same as the last passage time for the same grid with
Bernoulli weights $\boldsymbol{1}(w(v)>m)$. In this section, on an
explicitly constructed event of very high probability, $L_{n}$ is
shown to locally satisfy a reversed Lipschitz condition, where now
$L_{n}$ is considered as a function of the number of $hi$ mode weights
over the grid.

\subsection{Linear Growth of $M_{n}$}

First, we show that there exists an absolute constant $0<c_{1}<1$
such that the probability that $M_{n}$ is larger than $c_{1}n$ is
exponentially small.
\begin{prop}
\label{prop:concentcn}There exist constants $0<c_{1}<1$ and $0<c_{2}<+\infty$,
independent of $n$, such that 

\[
\mathbb{P}(M_{n}\ge c_{1}n)\le\exp(-c_{2}n),
\]
for $n$ large enough.
\end{prop}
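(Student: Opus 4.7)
The plan is to combine a Chernoff-type bound for a single path with a union bound over the (not-too-many) directed paths in a thin rectangle. First I would fix a constant $c_{1}$ strictly between $p := \mathbb{P}(w>m)$ and $1$. For any particular directed path $\pi \in \Pi$, the random variable $\sum_{v\in\pi}\mathbf{1}(w(v)>m)$ is a sum of $n+n^{\alpha}-1$ i.i.d.\ Bernoulli$(p)$ weights, hence Binomial$(n+n^{\alpha}-1,p)$. Since $c_{1}n/(n+n^{\alpha}-1)\to c_{1} > p$ as $n\to\infty$, a standard Chernoff bound produces a constant $c_{3}>0$, depending only on $p$ and $c_{1}$, such that
\[
\mathbb{P}\!\left(\sum_{v\in\pi}\mathbf{1}(w(v)>m)\ge c_{1}n\right)\le \exp(-c_{3}n)
\]
for every $\pi\in\Pi$ and every sufficiently large $n$.

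Next I would count paths. Since every directed path from $(1,1)$ to $(n,n^{\alpha})$ is determined by its $n^{\alpha}-1$ upward steps among $n+n^{\alpha}-1$ total steps, $|\Pi|=\binom{n+n^{\alpha}-1}{n^{\alpha}-1}$. Using $\binom{a}{b}\le (ea/b)^{b}$, I get $|\Pi|\le \bigl(e(n+n^{\alpha}-1)/(n^{\alpha}-1)\bigr)^{n^{\alpha}-1}\le \exp(C n^{\alpha}\log n)$ for some absolute constant $C$ and $n$ large. Because $\alpha<1/3<1$, we have $n^{\alpha}\log n = o(n)$.

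Combining via a union bound,
\[
\mathbb{P}(M_{n}\ge c_{1}n)\le |\Pi|\cdot \exp(-c_{3}n)\le \exp\bigl(C n^{\alpha}\log n - c_{3}n\bigr)\le \exp(-c_{2}n)
\]
for any $0<c_{2}<c_{3}$ and all $n$ large enough, which is the claim.

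I do not expect a serious obstacle here: the rectangle is thin enough that the cardinality of $\Pi$ is subexponential in $n$, while the Chernoff bound for a single path is exponential in $n$, so the union bound is comfortable. The only things to be careful with are (i) choosing $c_{1}\in(p,1)$ so that the Chernoff rate is genuinely positive, and (ii) absorbing the $\exp(Cn^{\alpha}\log n)$ factor into $\exp(-c_{2}n)$, which uses $\alpha<1$ (a weaker condition than the $\alpha<1/3$ eventually required for the main theorem).
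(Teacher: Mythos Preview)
Your argument is correct, and it is genuinely different from the paper's route. The paper first proves a concentration inequality for $M_{n}$ around its mean via the entropy method (Proposition~\ref{prop:concent}), and then imports Martin's shape-function asymptotics $g((1,a))=p+2\sqrt{p(1-p)a}+o(\sqrt{a})$ to control $\mathbb{E}M_{n}\le (p+1)n/2$ for large $n$; combining the two yields the exponential tail. Your approach bypasses both ingredients: a Chernoff bound for a single path gives an exponential-in-$n$ tail, and because the rectangle is thin ($\alpha<1$) the number of directed paths is only $\exp(O(n^{\alpha}\log n))=e^{o(n)}$, so a union bound suffices. This is more elementary and self-contained (no appeal to \cite{martin2004limiting}), and it in fact yields the conclusion for any fixed $c_{1}\in(p,1)$ and any $\alpha<1$. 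What the paper's method buys is a concentration statement around the mean that is insensitive to the path count and would survive in regimes where $|\Pi|$ is genuinely exponential in $n$; your union bound would fail there, but for the present proposition in a thin rectangle it is the shorter path.
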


To prove Proposition \ref{prop:concentcn}, we start by showing a
concentration inequality for $M_{n}$. The proof, via the entropy
method is akin to the proof of Theorem $3.12$ described in \cite{auffinger201550}.
\begin{prop}
\label{prop:concent}There exists $0<c_{3},c_{4}<\infty$ such that
for $t\in(0,c_{4}\sqrt{n+n^{\alpha}-1})$,

\[
\mathbb{P}(M_{n}-\mathbb{E}M_{n}\ge t\sqrt{n+n^{\alpha}-1})\le\exp(-c_{3}t^{2}).
\]
\end{prop}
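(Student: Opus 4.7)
The plan is to apply the entropy/modified log-Sobolev method of Boucheron--Lugosi--Massart, exactly as in the derivation of Theorem $3.12$ of \cite{auffinger201550} for first passage percolation. View $M_n$ as a function $f$ of the $n\cdot n^{\alpha}$ i.i.d.~Bernoulli variables $X_v := \boldsymbol{1}(w(v)>m)$, $v \in [n]\times[n^{\alpha}]$, and for each vertex $v$ let $M_n^{(v)} := f(\ldots,X_v',\ldots)$, with $X_v'$ an independent copy of $X_v$, denote $M_n$ after resampling coordinate $v$. The entropy method will reduce the upper tail of $M_n-\mathbb{E} M_n$ to a uniform bound on the conditional variance proxy $V^+ := \mathbb{E}\bigl[\sum_v (M_n - M_n^{(v)})_+^2 \bigm| X\bigr]$ together with the pointwise bound $(M_n - M_n^{(v)})_+ \le 1$ coming from the Bernoulli nature of the weights.

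The critical input, which yields the correct scale $\sqrt{n+n^{\alpha}-1}$ rather than the naive McDiarmid scale $\sqrt{n^{1+\alpha}}$, is the deterministic geodesic bound
\[
\sum_v \bigl(M_n - M_n^{(v)}\bigr)_+^2 \;\le\; |\pi^\ast| \;=\; n+n^{\alpha}-1,
\]
where $\pi^\ast$ is any geodesic attaining $M_n$. Indeed, if $v\notin\pi^\ast$, then $\pi^\ast$ remains admissible with the same weight after the swap, so $M_n^{(v)}\ge M_n$ and $(M_n-M_n^{(v)})_+=0$. If $v\in\pi^\ast$, the weight of $\pi^\ast$ decreases by at most $X_v\le 1$, giving $(M_n-M_n^{(v)})_+\le 1$. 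Summing the squared increments over the $n+n^{\alpha}-1$ vertices of $\pi^\ast$ yields the claim, the length being deterministic since every unit-step up-right path from $(1,1)$ to $(n,n^{\alpha})$ contains exactly $n+n^{\alpha}-1$ vertices.

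Plugging $V^+\le n+n^{\alpha}-1$ into the Herbst recursion will then produce a sub-Gaussian upper-tail estimate of the form
\[
\mathbb{P}\bigl(M_n-\mathbb{E} M_n \ge s\bigr) \;\le\; \exp\!\left(-\frac{c\, s^2}{n+n^{\alpha}-1}\right),
\]
valid for $s$ up to a constant multiple of $n+n^{\alpha}-1$; choosing $s=t\sqrt{n+n^{\alpha}-1}$ gives the claimed inequality with $c_3 = c$. The stated window $t\in(0,c_4\sqrt{n+n^{\alpha}-1})$ reflects precisely the scale at which the Bennett/Bernstein correction in the modified log-Sobolev bound still permits a Gaussian (rather than sub-exponential) tail.

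The main obstacle I anticipate is not in the entropy calculus, which is by now routine once $V^+$ is controlled, but rather in securing the path-length denominator in place of the total-volume denominator; the geodesic monotonicity, namely that resampling a vertex \emph{off} $\pi^\ast$ cannot decrease $M_n$, is the essential structural fact that localizes the variance contribution to the $n+n^{\alpha}-1$ coordinates on the geodesic and thereby produces the sharp concentration scale.
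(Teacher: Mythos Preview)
Your proposal is correct and follows essentially the same route as the paper: both apply the Boucheron--Lugosi--Massart entropy method to the Bernoulli-weighted last passage time, and both obtain the scale $n+n^{\alpha}-1$ by the geodesic localization that forces $(M_n-M_n^{(v)})_+=0$ off an optimal path. The only cosmetic differences are that the paper works directly with the entropy bound $Ent\,e^{\lambda M_n}\le \mathbb{E}\bigl[e^{\lambda M_n}q(-\lambda(M_n-M_n')_+)\bigr]$ (with $q(x)=x(e^x-1)$) and localizes to the intersection $\mathcal{G}$ of \emph{all} geodesics rather than a single chosen geodesic $\pi^\ast$; since $|\mathcal{G}|\le|\pi^\ast|=n+n^{\alpha}-1$, the resulting bound is the same.
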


\begin{proof}
Let $\psi(\lambda)=\log\mathbb{E}\exp(\lambda(M_{n}-\mathbb{E}M_{n}))$.
Then, as shown next, it suffices to show that for some $c>0$ and
$\lambda\in(0,c)$,
\begin{equation}
\psi(\lambda)\le c(n+n^{\alpha}-1)\lambda^{2}.\label{eq:philambda}
\end{equation}
Indeed, for any $\lambda>0$,
\begin{eqnarray*}
\mathbb{P}\left(M_{n}-\mathbb{E}M_{n}\ge\sqrt{n+n^{\alpha}-1}t\right) & \mathbb{\le} & \mathbb{P}(\exp(\lambda(M_{n}-\mathbb{E}M_{n}))\ge\exp(t\lambda\sqrt{n+n^{\alpha}-1}))\\
 & \mathbb{\le} & \exp\left(\psi(\lambda)-t\lambda\sqrt{n+n^{\alpha}-1}\right)\\
 & \le & \exp\left(c(n+n^{\alpha}-1)\lambda^{2}-t\lambda\sqrt{n+n^{\alpha}-1}\right).
\end{eqnarray*}
Letting $\lambda=t\sqrt{n+n^{\alpha}-1}/2c$ will complete the proof,
wherever (\ref{eq:philambda}), which we proceed to prove next, holds
true. For any non-negative random variable $X$ (and the convention
$0\ln0=0$), let 
\[
EntX=\mathbb{E}X\log X-\mathbb{E}X\log\mathbb{E}X.
\]
Then, 
\begin{eqnarray*}
\frac{d}{d\lambda}\left(\frac{\psi(\lambda)}{\lambda}\right) & = & \frac{d}{d\lambda}\left(\frac{1}{\lambda}\ln\mathbb{E\exp}(\lambda(M_{n}-\mathbb{E}M_{n}))\right)\\
 & = & -\frac{1}{\lambda^{2}}\ln\mathbb{E}\exp(\lambda(M_{n}-\mathbb{E}M_{n}))+\frac{1}{\lambda}\frac{\mathbb{E}(M_{n}-\mathbb{E}M_{n})\exp(\lambda(M_{n}-\mathbb{E}M_{n}))}{\mathbb{E}\exp(\lambda(M_{n}-\mathbb{E}M_{n}))}\\
 & = & -\frac{1}{\lambda^{2}}\ln\mathbb{E}\exp(\lambda M_{n})\cdot\exp(-\lambda\mathbb{E}M_{n})+\frac{\mathbb{E}(M_{n}-EM_{n})\exp(\lambda M_{n})}{\lambda\mathbb{E}\exp(\lambda M_{n})}\\
 & = & -\frac{1}{\lambda^{2}}\left(\ln\mathbb{E}\exp(\lambda M_{n})-\lambda\mathbb{E}M_{n}\right)+\frac{\mathbb{E}(M_{n}-EM_{n})\exp(\lambda M_{n})}{\lambda\mathbb{E}\exp(\lambda M_{n})}\\
 & = & \frac{\mathbb{E}M_{n}}{\lambda}-\frac{1}{\lambda^{2}}\ln\mathbb{E}\exp(\lambda M_{n})+\frac{\mathbb{E}L\exp(\lambda M_{n})}{\lambda\mathbb{E}\exp(\lambda M_{n})}-\frac{\mathbb{E}M_{n}}{\lambda}\\
 & = & \frac{\lambda\mathbb{E}M\exp(\lambda M_{n})-\mathbb{E}\exp(\lambda M_{n})\ln\mathbb{E}\exp(\lambda M_{n})}{\lambda^{2}\mathbb{E}\exp(\lambda M_{n})}\\
 & = & \frac{Ent\exp(\lambda M_{n})}{\lambda^{2}\mathbb{E}\exp(\lambda M_{n})}.
\end{eqnarray*}
If 
\begin{equation}
Ent\exp(\lambda M_{n})\le c(n+n^{\alpha}-1)\lambda^{2}\mathbb{E}\exp(\lambda M_{n}),\label{eq:entropy}
\end{equation}
for $\lambda\in(0,c)$, then we would have
\begin{align*}
\frac{d}{d\lambda}\left(\frac{\psi(\lambda)}{\lambda}\right) & =\frac{Ent\exp(\lambda M_{n})}{\lambda^{2}\mathbb{E}\exp(ML_{n})}\\
 & \le c(n+n^{\alpha}-1),
\end{align*}
for which, it would follow that $\psi(\lambda)\le c(n+n^{\alpha}-1)\lambda^{2}$.
Let us therefore prove (\ref{eq:entropy}). First, enumerate the $n^{1+\alpha}$
vertices as $v_{1},v_{2},...,v_{n^{1+\alpha}}$ and denote the associated
Bernoulli weights as $w(v_{i})$, i.e., the indicator function of
whether $v_{i}$ is in $hi$-mode. By the tensorization property of
the entropy,
\begin{equation}
Ent\exp(\lambda M)\le\sum_{i=1}^{n^{1+\alpha}}\mathbb{E}Ent_{i}\exp(\lambda M),\label{eq:entropytensor}
\end{equation}
where $Ent_{i}(\cdot)$ is the entropy taken only relative to the
random weight $w(v_{i})$. Now, recall (see \cite[Theorem 6.15]{boucheron2013concentration}):
for all $t\in\mathbb{R}$,
\[
Ent\exp(tX)\le\mathbb{E}\left(\exp(tX)q(-t(X-X')_{+})\right),
\]
where $q(x)=x(e^{x}-1)$ and where $X'$ is an independent copy of
$X$ . Therefore, (\ref{eq:entropytensor}) and (\ref{eq:tensorentropyapplied})
lead to
\begin{equation}
Ent\exp(\lambda M)\le\sum_{i=1}^{n^{1+\alpha}}\mathbb{E}\left(\exp(\lambda M)q(-\lambda(M-M_{i}')_{+})\right).\label{eq:tensorentropyapplied}
\end{equation}
However, it is clear that $M-M_{i}'\le1$ with equality if and only
if $w(v_{i})=1$ and, its independent copy, $w'(v_{i})=0$, for $v_{i}\in\mathcal{G}$,
where $\mathcal{G}$ is the set of vertices in the intersection of
all the geodesics, i.e., $\mathcal{G}=\cap_{geodesics}\{v\in geodesic\}$.
So it follows that 
\[
(M-M_{i}')_{+}\le1-w'(v_{i}),
\]
which in turn yields that 
\[
-\lambda(M-M_{i}')_{+}\ge-\lambda(1-w'(v_{i})).
\]
On the other hand, $q'(x)=xe^{x}+e^{x}-1<0$, when $x<0$, and so
\[
q(-\lambda(M-M_{i}')_{+})\le q(-\lambda(1-w'(v_{i})).
\]
Moreover, $q(0)=0$ gives us 
\[
\mathbb{E}\left(\exp(\lambda M)q(-\lambda(M-M_{i}')_{+})\right)=\mathbb{E}\left(\exp(\lambda M)q(-\lambda(M-M_{i}')_{+})\boldsymbol{1}(v_{i}\in\mathcal{G})\right).
\]
 Thus,
\begin{align*}
Ent\exp(\lambda M) & \le\sum_{i=1}^{n^{1+\alpha}}\mathbb{E}\left(\exp(\lambda M)q(-\lambda(M-M_{i}')_{+})\boldsymbol{1}(v_{i}\in\mathcal{G})\right)\\
 & \le\sum_{i=1}^{n^{1+\alpha}}\mathbb{E}\left(\exp(\lambda M)q(-\lambda(1-w'(v_{i}))\boldsymbol{1}(v_{i}\in\mathcal{G})\right)\\
 & =\sum_{i=1}^{n^{1+\alpha}}\mathbb{E}\left(\exp(\lambda M)1(v_{i}\in\mathcal{G})\right)\mathbb{E}q(-\lambda(1-w(v_{i}))\\
 & =Card(\mathcal{G})\mathbb{E}q(-\lambda(1-w(v_{1}))\mathbb{E}\left(\exp(\lambda M)\right).
\end{align*}
Since any geodesic covers exactly $n+n^{\alpha}-1$ vertices, $Card(\mathcal{G})\le n+n^{\alpha}-1$,
and
\begin{equation}
Ent\exp(\lambda M)\le(n+n^{\alpha}-1)\mathbb{E}q(-\lambda(1-w(e_{1}))\mathbb{E}\exp(\lambda M).\label{eq:entropyupbd}
\end{equation}
Now, by dominated convergence,
\begin{align}
\lim_{\lambda\searrow0}\frac{\mathbb{E}q(-\lambda(1-w(v_{1}))}{\lambda^{2}} & =\mathbb{E}\left(\lim_{\lambda\searrow0}\frac{(1-w(v)(1-\exp(-\lambda(1-w(v_{1})))}{\lambda}\right)\nonumber \\
 & =\mathbb{E}(1-w(v_{1}))^{2}=1-p.\label{eq:dominconv}
\end{align}
Hence, there exists $c$ such that when $\lambda\in(0,c)$, $\mathbb{E}q(-\lambda(1-w(v_{1}))\le\lambda^{2}$.
Combining (\ref{eq:dominconv}) with (\ref{eq:entropyupbd}), it finally
follows that
\[
Ent\exp(\lambda M)\le(n+n^{\alpha}-1)\lambda^{2}\mathbb{E}\exp(\lambda M),
\]
for $\lambda\in(0,c)$.
\end{proof}
\begin{rem}
Note that in Proposition \ref{prop:concent}, and in contrast to \cite[Theorem 1.1]{damron2014subdiffusive},
the subcritical condition, i.e., $p<p_{c}$, where $p_{c}$ is the
critical probability in directed bond percolation, in two dimensions,
is not required. This is mainly due to the fact that the subcritical
condition is needed there to bound the length of the geodesics in
undirected percolation; however, in our directed case, any directed
path is naturally of length $n+n^{\alpha}-1$.
\end{rem}

\begin{proof}[$\text{\textbf{Proof of Proposition}}$ \ref{prop:concentcn}:]
 Let $g$ be the shape function, i.e., let $g((1,a))=\lim_{n\rightarrow+\infty}\allowbreak\mathbb{E}M(n,na)/n$,
where $M(n,na)$ is the last passage time over a $n\times na$ grid.
It is shown in \cite{martin2004limiting} that $g((1,a))=p+2\sqrt{p(1-p)a}+o(\sqrt{a})$,
as $a\rightarrow0$. Hence, there exists $N$ such that for $n>N$,
$\mathbb{E}M(n,n^{\alpha})\le(p+1)n/2$, which, when combined with
Proposition \ref{prop:concent}, gives $\mathbb{P}(M\ge(p+1)n/2+t\sqrt{n+n^{\alpha}-1})\le\exp(-c_{1}t^{2})$,
for any $t\in(0,c_{4}\sqrt{n+n^{\alpha}-1})$. 

Further, let $0<\text{\ensuremath{\varepsilon}}<(1-p)/2$. Then there
exists a constant $0<\varepsilon'<c_{4}$, independent of $n$, such
that if $t=\varepsilon'\sqrt{n+n^{\alpha}-1}\in(0,c_{4}\sqrt{n+n^{\alpha}-1})$,
then $t\sqrt{n+n^{\alpha}-1}\le\varepsilon n$ and $t^{2}=(\varepsilon')^{2}(n+n^{\alpha}-1)>(\varepsilon')^{2}n$.
Hence, for this particular $t$, $\mathbb{P}(M\ge(\varepsilon+(p+1)/2)n)\le\exp(-c_{3}(\varepsilon')^{2}n)$.
Setting $c_{1}=(\varepsilon+(p+1)/2)<1$ and $c_{2}=c_{3}(\varepsilon')^{2}>0$,
finishes the proof.
\end{proof}

\subsection{Local Reversed Lipschitz Condition}

To begin with, let us set the underlying probability space as $\Omega_{n}=\mathbb{R}^{n^{1+\alpha}}$
associated with the product measure $\bigotimes_{i=1}^{n^{1+\alpha}}F$
and let $W=(w(v_{i}))_{i=1}^{n^{1+\alpha}}$ be the random vector
of weights under an arbitrary but deterministic enumeration of weights
over all the $n^{1+\alpha}$ vertices. Let $N$ be the total number
of $v_{i}$ such that $w(v_{i})$ is in $hi$ mode and so, clearly,
$N$ is a binomial variable with parameters $n^{1+\alpha}$ and $p$.
In addition, any weight $w$ can be decided in a two-step way: it
is first fixed to be in $hi/lo$ mode by flipping a Bernoulli random
variable with parameter $p$; then it is further associated with a
non-negative real value by drawing from $F$ conditioned on the fixed
$hi/lo$ mode in the first step. Based on this point of view, one
can construct an iterative scheme to decide $W$ by starting from
a grid with all the weights in $lo$ mode and changing them into $hi$
mode one by one until after some deliberate random steps.

To be more precise, a (finite) sequence of random vectors of weights
$\{W^{k}=(w^{k}(v_{i}))_{i=1}^{n^{1+\alpha}}\}_{k=0}^{n^{1+\alpha}}$
is iteratively defined as follows: First, let $W^{0}=\{w^{0}(v_{i})\}_{i=1}^{n^{1+\alpha}}$,
where $w^{0}(v_{i})$ has distribution $F$ conditioned on being in
$lo$ mode. Then, $W^{0}$ is clearly identical, in distribution,
to $W$ conditioned on $N=0$. Second, once $W^{k}$ is defined, one
vertex $v_{i_{0}}$ is uniformly chosen at random from the set $\{v_{i}:\ w^{k}(v_{i})\ in\ lo\ mode\}$
and then $W^{k+1}$ is defined such that $w^{k+1}(v_{i_{0}})$ is
sampled from $F(\cdot)$ conditioning on being in $hi$ mode and $w^{k+1}(v_{i})=w^{k}(v_{i})$
for $i\neq i_{0}$, i.e., $W^{k+1}$ is defined by changing one uniformly
chosen $lo$-mode weight in $W^{k}$ to a $hi$-mode weight. The second
step is repeated $n^{1+\alpha}$ times until all $lo$-mode weights,
in $W^{0}$, are changed to only $hi$-mode weights in $W^{n^{1+\alpha}}$.

By the very definition, for $0\le k\le n^{1+\alpha}$, there are $k$
$lo$-mode weights in $W^{k}$. Moreover, $\{W^{k}\}_{k=0}^{n^{1+\alpha}}$
are dependent but independent of both $W$ and $N$. Next, we show
that $W^{k}$ has the same law as $W$ conditioned on $N=k$.
\begin{lem}
\label{lem:eqdist}For any $k=0,1,...,n^{1+\alpha}$,
\begin{equation}
W^{k}=_{d}(W\ |\ N=k),\label{eq:condeqdis}
\end{equation}
and moreover, 
\begin{equation}
W^{N_{1}}=_{d}W,\label{eq:eqdis}
\end{equation}
where $=_{d}$ denotes equality in distribution.
\end{lem}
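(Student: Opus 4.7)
My plan is to prove the first assertion \eqref{eq:condeqdis} by induction on $k$ and then deduce \eqref{eq:eqdis} by conditioning on $N$. The key observation is that, since the construction of $W^{k+1}$ from $W^{k}$ only rewrites one coordinate using a fresh draw from $F$ conditioned on being in $hi$ mode, the vector $W^k$ splits naturally into two parts, namely the identity of the $hi$-mode vertex set and the conditional distribution of the weights given this set; the inductive step reduces to checking that both match the corresponding decomposition of $(W\mid N=k)$.

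For the base case $k=0$, both $W^0$ and $(W\mid N=0)$ consist of $n^{1+\alpha}$ i.i.d.\ coordinates drawn from $F$ restricted to the $lo$-mode region, so \eqref{eq:condeqdis} holds trivially. For the inductive step, assume $W^k\eqdist (W\mid N=k)$ and let $S_k$ denote the (random) set of $hi$-mode coordinates of $W^k$. By the inductive hypothesis, $S_k$ is uniform on the $k$-subsets of $[n^{1+\alpha}]$ and, conditionally on $S_k$, the coordinates are independent with the prescribed conditional laws. The construction of $W^{k+1}$ adds an element chosen uniformly from $[n^{1+\alpha}]\setminus S_k$ and resamples its weight from $F$ conditioned on $hi$ mode, independently of everything else. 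A direct computation using the identity $\binom{n^{1+\alpha}}{k}(n^{1+\alpha}-k)=\binom{n^{1+\alpha}}{k+1}(k+1)$ then shows
\[
\mathbb{P}(S_{k+1}=A)=\sum_{v\in A}\mathbb{P}(S_k=A\setminus\{v\})\cdot\frac{1}{n^{1+\alpha}-k}=\frac{1}{\binom{n^{1+\alpha}}{k+1}},
\]
for every $(k+1)$-subset $A$, i.e.\ $S_{k+1}$ is uniform. Since the coordinates in $A$ were each set, at the step where they entered $S$, by an independent draw from $F\mid hi$, while the coordinates outside $A$ remain the original independent $F\mid lo$ draws of $W^0$, the conditional law of $W^{k+1}$ given $S_{k+1}=A$ matches that of $(W\mid N=k+1,\, \{v:w(v)>m\}=A)$. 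This proves \eqref{eq:condeqdis}.

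For \eqref{eq:eqdis}, use the stated independence of $\{W^k\}_{k=0}^{n^{1+\alpha}}$ from $N$: for any measurable set $B\subset\Omega_n$,
\[
\mathbb{P}(W^{N}\in B)=\sum_{k=0}^{n^{1+\alpha}}\mathbb{P}(N=k)\,\mathbb{P}(W^{k}\in B)=\sum_{k=0}^{n^{1+\alpha}}\mathbb{P}(N=k)\,\mathbb{P}(W\in B\mid N=k)=\mathbb{P}(W\in B),
\]
where the middle equality uses \eqref{eq:condeqdis}.

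The only delicate point is verifying, in the inductive step, that the conditional law of the weights given the $hi$-set is preserved by the resampling move. This is essentially bookkeeping: one must check that the choice of which coordinate to flip is independent of the values of the unflipped coordinates (true since the flip is uniform on the $lo$-mode positions only, and under the inductive hypothesis the unflipped values are exchangeable given $S_k$), and that the newly drawn value is independent of the past (true by construction). Once this is unpacked, \eqref{eq:condeqdis} follows and \eqref{eq:eqdis} is immediate.
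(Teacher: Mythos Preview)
Your proposal is correct and takes essentially the same approach as the paper: induction on $k$ using the binomial identity $\binom{n^{1+\alpha}}{k}(n^{1+\alpha}-k)=(k+1)\binom{n^{1+\alpha}}{k+1}$ to establish uniformity of the hi-mode set, followed by conditioning on $N$ and using independence for \eqref{eq:eqdis}. Your decomposition into $(S_k,\text{values}\mid S_k)$ is more explicit than the paper's somewhat informal inductive hypothesis $\mathbb{P}(W^k=(\omega_i))=\binom{n^{1+\alpha}}{k}^{-1}$ (which, strictly read, only tracks the hi/lo pattern), and you argue via measurable sets where the paper uses characteristic functions, but these are cosmetic differences.
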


\begin{proof}
The proof is by induction on $k$. By definition, $W^{0}=_{d}W$ conditioned
on $N=0$. Assume now that (\ref{eq:condeqdis}) is true for $k$,
i.e., that for any $(\omega_{i})_{i=1}^{n^{1+\alpha}}\in\Omega_{n}$
such that $Card\left(\{\omega_{i}\ in\ lo\ mode\}_{i=1}^{n^{1+\alpha}}\right)=k$,
\begin{equation}
\mathbb{P}\left(W^{k}=(\omega_{i})_{i=1}^{n^{1+\alpha}}\right)=\left(\begin{array}{c}
n^{1+\alpha}\\
k
\end{array}\right)^{-1}.\label{eq:indudistr}
\end{equation}
Then, for any $(\omega_{i})_{i=1}^{n^{1+\alpha}}\in\Omega$ such that
$Card\left(\{\omega_{i}\ in\ lo\ mode\}_{i=1}^{n^{1+\alpha}}\right)=k+1$,
\begin{equation}
\mathbb{P}\left(W^{k+1}=(\omega_{i})_{i=1}^{n^{1+\alpha}}\right)=\sum_{j=1}^{k+1}\mathbb{P}\left(W^{k+1}=(\omega_{i})_{i=1}^{n^{1+\alpha}}|B_{j}^{k+1}\right)\mathbb{P}(B_{j}^{k+1}),\label{eq:indudistr2}
\end{equation}
where $B_{j}^{k+1}$, $1\le j\le k+1$, denotes the event that the
$jth$ weight $1$ in $\{\omega_{i}^{k+1}:\omega_{i}^{k+1}=1,\ 1\le i\le n^{1+\alpha}\}$
is the one which has been flipped uniformly at random from the weight
$0$ in $W^{k}$. Combining (\ref{eq:indudistr}) and (\ref{eq:indudistr2})
gives
\begin{eqnarray*}
\mathbb{P}\left(W^{k+1}=(\omega_{i})_{i=1}^{n^{1+\alpha}}\right) & = & \sum_{j=1}^{k+1}\left(\begin{array}{c}
n^{1+\alpha}\\
k
\end{array}\right)^{-1}\frac{1}{n^{1+\alpha}-k}\\
 & = & \frac{k!\left(n^{1+\alpha}-k\right)}{\left(n^{1+\alpha}\right)!}\frac{k+1}{n^{1+\alpha}-k}\\
 & = & \left(\begin{array}{c}
n^{1+\alpha}\\
k+1
\end{array}\right)^{-1}.
\end{eqnarray*}
Next, (\ref{eq:condeqdis}) and the independence of $N$ and $\{W^{k}\}_{k=0}^{n^{1+\alpha}}$
give
\begin{eqnarray*}
\mathbb{E}\left(\exp(i\langle t,W\rangle)\right) & \mathbb{=} & \sum_{k=0}^{n^{1+\alpha}}\mathbb{E}\left(\exp\left(i\langle t,W\rangle\right)|N=k\right)\mathbb{P}(N=k)\\
 & = & \sum_{k=0}^{n^{1+\alpha}}\mathbb{E}\left(\exp\left(i\langle t,W^{k}\rangle\right)|N=k\right)\mathbb{P}(N=k)\\
 & = & \sum_{k=0}^{n^{1+\alpha}}\mathbb{E}\left(\exp\left(i\langle t,W^{N}\rangle\right)|N=k\right)\mathbb{P}(N=k)\\
 & = & \mathbb{E}\left(\exp\left(i\langle t,W^{N}\rangle\right)\right).
\end{eqnarray*}
\end{proof}
This particular way of iterative sampling provides a new perspective
on $L_{n}$. Letting $L_{n}(k):=L_{n}(W^{k})$ and $L_{n}:=L_{n}(W)$
be respectively the last passage times under weights settings $W^{k}$
and $W$, it is clear from Lemma \ref{lem:eqdist}, that $L_{n}(N)=_{d}L_{n}$
and so it is equivalent to study $\mathbb{M}_{r}(L_{n}(N))$ or $\mathbb{M}_{r}(L_{n})$.
We finish this section by showing that on an event of probability
exponentially close to $1$, $\{L_{n}(k)\}_{i=1}^{n^{1+\alpha}}$
satisfies locally a reversed Lipschitz condition.
\begin{lem}
\label{lem:lbOn}There exist positive constants $c_{2}$, $c_{5}$
and $c_{6}$ not depending on $n$ such that, when $n$ is large enough,
\begin{align*}
 & \mathbb{P}\left(O_{n}:=\bigcap_{\begin{array}{c}
i,j\in I,\ j\ge i+c_{6}\sqrt{p(1-p)n^{1+\alpha}}\end{array}}\left\{ L_{n}(j)-L_{n}(i)\ge\frac{c_{5}}{n^{\alpha}}(j-i)\right\} \right)\\
\ge & \ 1-12p(1-p)n^{1+\alpha}\exp(-c_{2}n)-p(1-p)n^{1+\alpha}\exp\left(-\frac{c_{5}^{2}c_{6}\sqrt{p(1-p)}}{4}n^{\frac{1-3\alpha}{2}}\right),
\end{align*}
where $I=\left(n^{1+\alpha}p-\sqrt{(1-p)pn^{1+\alpha}},\ n^{1+\alpha}p+\sqrt{(1-p)pn^{1+\alpha}}\right)$.
\end{lem}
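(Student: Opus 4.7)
The plan is to telescope $L_n(j)-L_n(i)=\sum_{l=i}^{j-1}D_l$ with $D_l:=L_n(W^{l+1})-L_n(W^l)$, and on a high-probability event establish both a lower bound on the conditional means $\mathbb{E}[D_l\mid\mathcal{F}_l]$, with $\mathcal{F}_l:=\sigma(W^0,\dots,W^l)$, and an Azuma--Hoeffding concentration of $\sum D_l$ around that mean. The step $W^l\to W^{l+1}$ resamples a single weight at a uniformly chosen $lo$-mode vertex $V$ of $W^l$ in $hi$ mode, so $w^{l+1}(V)>m\ge w^l(V)$; hence $0\le D_l\le C$ deterministically, and $D_l\ge w^{l+1}(V)-w^l(V)>0$ whenever $V$ lies in the set $\mathcal{G}(W^l)$ of vertices covered by some $W^l$-geodesic.

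For the conditional-mean bound, set $\mathcal{A}_l:=\{M_n(W^l)\le c_1 n\}$, the event from Proposition \ref{prop:concentcn}. On $\mathcal{A}_l$ every $W^l$-geodesic contains at most $c_1n$ $hi$-mode sites and hence at least $(n+n^{\alpha}-1)-c_1n\ge(1-c_1)n$ $lo$-mode sites, so
\[
\mathbb{P}(V\in\mathcal{G}(W^l)\mid\mathcal{F}_l)\ \ge\ \frac{(1-c_1)n}{n^{1+\alpha}-l}\ \ge\ \frac{1-c_1}{n^{\alpha}}.
\]
Conditional on $V\in\mathcal{G}(W^l)$ the expected weight gain equals $\mu_+-w^l(V)\ge\mu_+-m>0$, with $\mu_+:=\mathbb{E}[w\mid w>m]$, and therefore $\mathbb{E}[D_l\mid\mathcal{F}_l]\ge 2c_5/n^{\alpha}$ on $\mathcal{A}_l$, where $2c_5:=(1-c_1)(\mu_+-m)$.

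The differences $Y_l:=D_l-\mathbb{E}[D_l\mid\mathcal{F}_l]$ form a bounded martingale difference sequence, $|Y_l|\le C$, so by Azuma--Hoeffding and $j-i\ge c_6\sqrt{p(1-p)n^{1+\alpha}}$,
\[
\mathbb{P}\Bigl(\sum_{l=i}^{j-1}Y_l\le-\frac{c_5(j-i)}{n^{\alpha}}\Bigr)\ \le\ \exp\Bigl(-\frac{c_5^{2}c_6\sqrt{p(1-p)}}{4}\,n^{\frac{1-3\alpha}{2}}\Bigr),
\]
after absorbing $C$ into the constants. On $\bigcap_{l\in[i,j)}\mathcal{A}_l$ intersected with the complement of this Azuma event, $\sum D_l\ge 2c_5(j-i)/n^\alpha-c_5(j-i)/n^\alpha=c_5(j-i)/n^\alpha$. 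To control $\mathbb{P}(\mathcal{A}_l^c)$ for $l\in I$, Lemma \ref{lem:eqdist} gives $\mathbb{P}(\mathcal{A}_l^c)=\mathbb{P}(M_n\ge c_1n\mid N=l)\le\mathbb{P}(M_n\ge c_1n)/\mathbb{P}(N=l)$, and the local CLT estimate $\mathbb{P}(N=l)\gtrsim 1/\sqrt{n^{1+\alpha}}$ on $I$ combined with Proposition \ref{prop:concentcn} yields $\mathbb{P}(\mathcal{A}_l^c)\lesssim\sqrt{n^{1+\alpha}}\exp(-c_2n)$. Union-bounding over the $\le|I|\le 2\sqrt{p(1-p)n^{1+\alpha}}$ indices $l$ and the $\le|I|^2\le 4p(1-p)n^{1+\alpha}$ admissible pairs $(i,j)$ then produces the two stated error terms.

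The delicate step is the conditional-mean bound: it couples the linear-growth cap of Proposition \ref{prop:concentcn} with the uniform-on-$lo$-mode sampling of $V$ to extract the $1/n^\alpha$ factor characteristic of the thin rectangle. The restriction $\alpha<1/3$ then emerges from balancing the Azuma denominator $n^{2\alpha}$ against the length scale $j-i\gtrsim n^{(1+\alpha)/2}$, which forces the exponent $(1-3\alpha)/2$ to be positive.
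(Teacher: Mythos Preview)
Your proposal is correct and follows essentially the same route as the paper: telescope $L_n(j)-L_n(i)$ into one-step increments, use Proposition~\ref{prop:concentcn} on the event $\mathcal{A}_l$ to lower-bound the conditional mean of each increment by $c/n^\alpha$ via the pigeonhole count of $lo$-mode sites on a geodesic, apply Azuma--Hoeffding to the centered increments, and control $\mathbb{P}(\mathcal{A}_l^c)$ for $l\in I$ through Lemma~\ref{lem:eqdist} and a local-CLT lower bound on $\mathbb{P}(N=l)$, finishing with union bounds over $l\in I$ and over pairs $(i,j)$. The only technical difference is cosmetic: the paper redefines the increments as $\Delta_{k+1}=L_n(k+1)-L_n(k)$ on $\mathcal{A}_k$ and $\Delta_{k+1}=c_5/n^\alpha$ otherwise, so that the drift bound $\mathbb{E}[\Delta_{k+1}\mid\mathcal{F}_k]\ge c_5/n^\alpha$ holds \emph{unconditionally} and Azuma can be applied to $\Delta_k-\mathbb{E}[\Delta_k\mid\mathcal{F}_{k-1}]$ without worrying about the bad set, while you apply Azuma directly to $Y_l=D_l-\mathbb{E}[D_l\mid\mathcal{F}_l]$ (valid since $0\le D_l\le C$ always) and only afterward intersect with $\bigcap_l\mathcal{A}_l$ to recover the drift; both are equivalent. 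One small wording slip: when $V$ lies on a $W^l$-geodesic you have $D_l\ge w^{l+1}(V)-w^l(V)$, not equality, since the geodesic may change; the inequality is all you need and you use it correctly in the next line.
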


\begin{proof}
Define a set $B_{n}=\{w:\ w\in\Omega_{n},\ M_{n}(w)<c_{1}n\}$ and
so, by Proposition \ref{prop:concentcn}, $\mathbb{P}(B_{n})\ge1-\exp(-c_{2}n)$,
when $n$ is large enough. Further, let $A_{n}:=\{W\in B_{n}\}$ and
$A_{n}^{k}:=\{W^{k}\in B_{n}\}$. Then, by Lemma \ref{lem:eqdist},
\begin{equation}
\mathbb{P}\left(\left(\bigcap_{k\in I}A_{n}^{k}\right)^{c}\right)\le\sum_{k\in I}\mathbb{P}\left(\left(A_{n}^{k}\right)^{c}\right)=\sum_{k\in I}\mathbb{P}\left(A_{n}^{c}\ |\ N_{1}=k\right)\le\sum_{k\in I}\frac{\mathbb{P}(A_{n}^{c})}{\mathbb{P}(N_{1}=k)}.\label{eq:eventsA}
\end{equation}
Meanwhile, for any $k\in I$, $\mathbb{P}(N_{1}=k)\ge1/(6\sqrt{n^{1+\alpha}p(1-p)})$.
Indeed,
\begin{align*}
 & \mathbb{P}(N_{1}=k)\\
\ge & \min\left(\mathbb{P}\left(N_{1}=pn^{1+\alpha}-\lfloor\sqrt{n^{2+\alpha-\delta}}\rfloor\right),\ \mathbb{P}\left(N_{1}=pn^{1+\alpha}+\lfloor\sqrt{n^{2+\alpha-\delta}}\rfloor\right)\right),
\end{align*}
and, by de Moivre\textendash Laplace Theorem,
\begin{eqnarray*}
\mathbb{P}(N_{1}=pn^{1+\alpha}-\lfloor\sqrt{n^{2+\alpha-\delta}}\rfloor) & \ge & \frac{1}{2\sqrt{n^{1+\alpha}p(1-p)}}\exp\left(-\frac{\left(\lfloor\sqrt{n^{2+\alpha-\delta}}\rfloor\right)^{2}}{(1-p)pn^{1+\alpha}}\right)\\
 & \ge & \frac{1}{2\sqrt{n^{1+\alpha}p(1-p)}}\exp\left(-\frac{n^{1-\delta}}{(1-p)p}\right),
\end{eqnarray*}
when $n$ is large enough. Similarly, this lower bound also holds
for $\mathbb{P}(N_{1}=pn^{1+\alpha}+\lfloor\sqrt{n^{2+\alpha-\delta}}\rfloor)$
and therefore
\begin{equation}
\mathbb{P}(N_{1}=k)\ge\frac{1}{2\sqrt{n^{1+\alpha}p(1-p)}}\exp\left(-\frac{n^{1-\delta}}{(1-p)p}\right),\label{eq:N=00003Dkproblb}
\end{equation}
for any $k\in I$. Combining (\ref{eq:eventsA}) and (\ref{eq:N=00003Dkproblb})
gives:
\begin{eqnarray}
\mathbb{P}\left(\left(\bigcap_{k\in I}A_{n}^{k}\right)^{c}\right) & \le & 2\sqrt{n^{2+\alpha-\delta}}2\sqrt{n^{1+\alpha}p(1-p)}\mathbb{P}(A_{n}^{c})\nonumber \\
 & \le & 4p(1-p)n^{1+\alpha-\delta/2}\exp\left(n\left(-c_{2}+\frac{1}{n^{\delta}(1-p)p}\right)\right).\label{eq:Anup}
\end{eqnarray}
Next, before building a martingale difference sequence, we show that,
with high probability, the difference between $L_{n}(k+1)$ and $L_{n}(k)$
conditioned on $W^{k}$ can be lower bounded by a fractional polynomial
in $n$. Indeed, it always holds true that
\[
\mathbb{E}\left(L_{n}(k+1)-L_{n}(k)|W^{k}\right)\ge\frac{n+n^{\alpha}-M_{n}(k)}{n^{1+\alpha}-k}\left(\mathbb{E}(w|hi)-m\right),
\]
since $L_{n}(k+1)$ increases if and only if the chosen $lo$-mode
weight is on any geodesic under $W^{k}$. Note that there are at least
$(n+n^{\alpha}-M_{n}(k))$ many $lo$-mode weights on any geodesic
and $(n^{1+\alpha}-k)$ many $lo$-mode weights over the grid under
$W^{k}$, so the probability that any $lo$-mode weight on some geodesic
is chosen is at least $\left(n+n^{\alpha}-M_{n}(k)\right)/\left(n^{1+\alpha}-k\right)$.
In addition, the expected increment of a single flipping should be
$\left(\mathbb{E}(w|hi)-m\right)>0$. Hence, by conditioning on $A_{n}^{k}=\{M_{n}(k)<c_{1}n\}$,
\begin{equation}
\mathbb{E}\left(L_{n}(k+1)-L_{n}(k)|W^{k}\right)\ge\frac{(1-c_{1})}{n^{\alpha}}\left(\mathbb{E}(w|hi)-m\right).\label{eq:martingaleroot}
\end{equation}
Based on this lower bound, a martingale difference sequence is built
as follows: for each $k\ge0$, letting
\begin{eqnarray*}
\Delta_{k+1} & = & \begin{cases}
L_{n}(k+1)-L_{n}(k), & when\ A_{n}^{k}\ holds,\\
(1-c_{1})\left(\mathbb{E}(w|hi)-m\right)/n^{\alpha} & otherwise.
\end{cases}
\end{eqnarray*}
Therefore, letting $c_{5}:=(1-c_{1})\left(\mathbb{E}(w|hi)-m\right)$,
\begin{equation}
\mathbb{E}\left(\Delta_{k+1}|W^{k}\right)\ge\frac{c_{5}}{n^{\alpha}}.\label{eq:difflb}
\end{equation}
Now, for each $k=0,1,...,n^{1+\alpha}$, let $\mathcal{F}_{k}:=\sigma(W^{0},W^{1},..,W^{k})$,
be the $\sigma$-field generated by $W^{0}$, $W^{1}$,...,$W^{k}$.
Clearly, $\{\Delta_{k}-\mathbb{E}(\Delta_{k}|\mathcal{F}_{k-1}),\mathcal{\mathcal{F}}_{k}\}_{1\le k\le n^{1+\alpha}}$
forms a martingale differences sequence and since $0\le\text{\ensuremath{\Delta}}_{k}\le C$
and thus $-C\le\Delta_{k}-\mathbb{E}(\Delta_{k}\vert\mathcal{F}_{k-1})\le C$,
Hoeffding's martingale inequality gives, for any $i<j$,
\begin{align*}
\mathbb{P}\left(\sum_{k=i+1}^{j}(\Delta_{k}-\mathbb{E}(\Delta_{k}|\mathcal{F}_{k-1})<-\frac{c_{5}}{2n^{\alpha}}(j-i)\right) & \le\exp\left(-\frac{2c_{5}^{2}(j-i)^{2}}{4n^{2\alpha}\sum_{k=i+1}^{j}C^{2}}\right)\\
 & =\exp\left(-\frac{c_{5}^{2}(j-i)}{2n^{2\alpha}C^{2}}\right).
\end{align*}
Moreover, from (\ref{eq:difflb}), 
\[
\sum_{k=i+1}^{j}\mathbb{E}\left(\Delta_{k}|W^{k}\right)\ge\frac{c_{5}}{n^{\alpha}},
\]
 and therefore,
\begin{eqnarray}
\mathbb{P}\left(\sum_{k=i+1}^{j}\Delta_{k}\le\frac{c_{5}}{2n^{\alpha}}\right) & \le & \mathbb{P}\left(\sum_{k=i+1}^{j}(\Delta_{k}-\mathbb{E}(\Delta_{k}|\mathcal{F}_{k-1}))<-\frac{c_{5}}{2n^{\alpha}}(j-i)\right)\nonumber \\
 & \le & \exp\left(-\frac{c_{5}^{2}(j-i)}{2n^{2\alpha}C^{2}}\right).\label{eq:diffsumbd}
\end{eqnarray}
For each $n\ge1$, set 
\[
O_{n}^{\Delta}=\bigcap_{i,j\in I,\ j\ge i+\ell(n)}\left\{ \sum_{k=i+1}^{j}\Delta_{k}\ge\frac{c_{5}}{2n^{\alpha}}(j-i)\right\} ,
\]
where $\ell(n)\ge0$ will be fixed later. Then, by (\ref{eq:diffsumbd}),
\begin{eqnarray}
\mathbb{P}\left((O_{n}^{\Delta})^{c}\right) & \le & \sum_{i,j\in I,\ j\ge i+\ell(n)}\mathbb{P}\left(\sum_{k=i+1}^{j}\Delta_{k}<\frac{c_{5}}{2n^{\alpha}}(j-i)\right)\nonumber \\
 & \le & Card(I)^{2}\exp\left(-\frac{c_{5}^{2}\ell(n)}{2n^{2\alpha}C^{2}}\right)\nonumber \\
 & = & n^{2+\alpha-\delta}\exp\left(-\frac{c_{5}^{2}\ell(n)}{2n^{2\alpha}C^{2}}\right).\label{eq:Onup}
\end{eqnarray}
Next, by $\sum_{k=i+1}^{j}\Delta_{k}=L_{n}(j)-L_{n}(i)$ conditioned
on $A_{n}^{k}$ for $k\in[i,j]$ and the very definitions of $\Delta_{k}$
and $O_{n}$, 
\[
\left(\bigcap_{k\in I}A_{n}^{k}\right)\cap O_{n}^{\Delta}\subseteq O_{n}
\]
Therefore, combining (\ref{eq:Anup}) and (\ref{eq:Onup}) and letting
$\ell(n)=c_{6}\sqrt{n^{2+\alpha-\delta}}$ gives
\begin{align}
\mathbb{P}\left((O_{n})^{c}\right) & \le\mathbb{P}\left(\left(\bigcap_{k\in I}A_{n}^{k}\right)^{c}\right)+\mathbb{\mathbb{P}}\left((O_{n}^{\Delta})^{c}\right)\nonumber \\
 & \le4p(1-p)n^{1+\alpha-\delta/2}\exp\left(n\left(-c_{2}+\frac{1}{n^{\delta}(1-p)p}\right)\right)+n^{2+\alpha-\delta}\exp\left(-\frac{c_{5}^{2}\ell(n)}{2n^{2\alpha}C^{2}}\right)\nonumber \\
 & =4p(1-p)n^{1+\alpha-\delta/2}\exp\left(n\left(-c_{2}+\frac{1}{n^{\delta}(1-p)p}\right)\right)+n^{2+\alpha-\delta}\exp\left(-\frac{c_{5}^{2}c_{6}}{2C^{2}}n^{1-3\alpha/2-\delta}\right).\label{eq:probofOc}
\end{align}
Clearly, the right hand side of (\ref{eq:probofOc})converges, to
$0$, exponentially fast, as $n\rightarrow+\infty$, when $\alpha<2/3-\delta/2$
for any $\delta>0$.
\end{proof}

\section{Proof of Theorem \ref{thm:main} \label{Sec:mainproof}}

The beginning of the proof is similar to a corresponding proof in
\cite{HoudreLCSVARLB2012}. For a random variable $U$ with finite
$r$-th moment and for a random vector $V$, let $\mathbb{M}_{r}(U|V):=\mathbb{E}(|U-\mathbb{E}(U|V)^{r}|V$).
Clearly, by convexity and the conditional Jensen's inequality,
\begin{eqnarray}
\mathbb{M}_{r}(U|V) & \le & 2^{r}\left(\left(\mathbb{E}\left(|U-\mathbb{E}U|^{r}|V\right)\right)/2+\mathbb{E}\left(|\mathbb{E}\left(U|V\right)-\mathbb{E}U|^{r}|V\right)/2\right)\nonumber \\
 & \le & 2^{r}\mathbb{E}\left(|U-\mathbb{E}U|^{r}|V\right),\label{eq:condrmom}
\end{eqnarray}
and so, for any $n\ge1$,
\begin{eqnarray}
\mathbb{M}_{r}\left(L_{n}\left(N\right)\right) & \ge & \frac{1}{2^{r}}\mathbb{E}\left(\mathbb{M}_{r}\left(L_{n}\left(N\right)\right)|\left(L_{n}(k)\right)_{0\le k\le n^{1+\alpha}}\right)\nonumber \\
 & = & \frac{1}{2^{r}}\int_{\Omega_{n}}\mathbb{M}_{r}\left(L_{n}\left(N\right)|\left(L_{n}(k)\right)_{0\le k\le n^{1+\alpha}}(\omega)\right)\mathbb{P}(d\omega)\nonumber \\
 & \ge & \frac{1}{2^{r}}\int_{O_{n}}\mathbb{M}_{r}\left(L_{n}\left(N\right)|\left(L_{n}(k)\right)_{0\le k\le n^{1+\alpha}}(\omega)\right)\mathbb{P}(d\omega).\label{eq:condrmomlb}
\end{eqnarray}
Moreover, since $N$ is independent of $\left(L_{n}(k)\right)_{0\le k\le n^{1+\alpha}}$,
and from (\ref{eq:condrmom}), for each $\omega\in\Omega_{n}$,
\begin{align}
 & \mathbb{M}_{r}\left(L_{n}\left(N\right)|\left(L_{n}(k)\right)_{0\le k\le n^{1+\alpha}}(\omega)\right)\nonumber \\
\ge & \ \frac{1}{2^{r}}\mathbb{M}_{r}\left(L_{n}\left(N\right)|\left(L_{n}(k)\right)_{0\le k\le n^{1+\alpha}}(\omega),\boldsymbol{1}_{N\in I}=1\right)\mathbb{P}\left(N\in I|\left(L_{n}(k)\right)_{0\le k\le n^{1+\alpha}}(\omega)\right)\nonumber \\
= & \ \frac{1}{2^{r}}\mathbb{M}_{r}\left(L_{n}\left(N\right)|\left(L_{n}(k)\right)_{0\le k\le n^{1+\alpha}}(\omega),\boldsymbol{1}_{N\in I}=1\right)\mathbb{P}\left(N\in I\right).\label{eq:condrmomdecomp1}
\end{align}
In addition (see \cite[Lemma 2.2]{HoudreLCSVARLB2012}), note that
if $f:D\rightarrow\mathbb{Z}$ satisfies a local reversed Lipschitz
condition, i.e., $f$ is such that for any $i,j\in D$ with $j>i+\ell$,
$\ell\ge0$, $f(j)-f(i)\ge c(j-i)$ for some $c>0$ and if $T$ is
a $D$-valued random variable with $\mathbb{E}|f(T\}|^{r}<+\infty$,
$r\ge1$, then 
\[
\mathbb{M}_{r}(f(T))\ge\left(\frac{c}{2}\right)^{r}\left(\mathbb{M}_{r}(T)-\ell^{2}\right).
\]
So, for each $\omega\in O_{n}$, since $N$ is independent of $\left(L_{n}(k)\right)_{0\le k\le n^{1+\alpha}}$,
\begin{equation}
\mathbb{M}_{r}\left(L_{n}\left(N\right)|\left(L_{n}(k)\right)_{0\le k\le n^{1+\alpha}}(\omega),\boldsymbol{1}_{N\in I}=1\right)\ge\left(\frac{c_{3}}{n^{\alpha}}\right)^{r}\left(\mathbb{M}_{r}\left(N|\boldsymbol{1}_{N\in I}=1\right)-\ell(n)^{r}\right).\label{eq:condmomlbafterlip}
\end{equation}
Next, (\ref{eq:condrmomlb}), (\ref{eq:condrmomdecomp1}) and (\ref{eq:condmomlbafterlip})
lead to
\begin{equation}
\mathbb{M}_{r}\left(L_{n}(N)\right)\ge\frac{c_{3}^{r}}{2^{r}n^{r\alpha}}\left(\mathbb{M}_{r}\left(N|\boldsymbol{1}_{N\in I}=1\right)-\ell(n)^{r}\right)\mathbb{P}\left(N\in I\right)\mathbb{P}\left(O_{n}\right),\label{eq:condmomlbafterlip2}
\end{equation}
and it remains to estimate the first two terms on the right side of
(\ref{eq:condmomlbafterlip2}). By the Berry-Ess\'{e}en Theorem,
and for all $n\ge1$,
\begin{equation}
\left\lvert \mathbb{P}\left(N\in I\right)-\frac{1}{\sqrt{2\pi}}\int_{-1}^{1}\exp\left(-\frac{x^{2}}{2}\right)dx\right\rvert \le\frac{1}{\sqrt{n^{1+\alpha}p(1-p)}}.\label{eq:Berryesson}
\end{equation}
On the other hand,

\begin{align}
\mathbb{M}_{r}\left(N|\boldsymbol{1}_{N\in I}=1\right) & =\mathbb{E}\left(\left\lvert N-n^{1+\alpha}p+n^{1+\alpha}p-\mathbb{E}\left(N\vert\boldsymbol{1}_{N\in I}=1\right\rvert ^{r}\right)\vert\boldsymbol{1}_{N_{1}\in I}=1\right)\nonumber \\
 & \ge\left\lvert \mathbb{E}\left(\left\lvert N-n^{1+\alpha}p\right\rvert \vert\boldsymbol{1}_{N\in I}=1\right)^{1/r}-\left\lvert n^{1+\alpha}p-\mathbb{E}\left(N\vert\boldsymbol{1}_{N\in I}=1\right)\right\rvert \right\rvert ^{r},\label{eq:N1mom}
\end{align}
and when $n$ is large enough,
\begin{align}
 & \left\lvert n^{1+\alpha}p-\mathbb{E}\left(N\vert\boldsymbol{1}_{N\in I}=1\right)\right\rvert \nonumber \\
= & \sqrt{n^{1+\alpha}p(1-p)}\left\lvert \mathbb{E}\left(\frac{N-n^{1+\alpha}p}{\sqrt{n^{1+\alpha}p(1-p)}}\vert\boldsymbol{1}_{N\in I}=1\right)\right\rvert \nonumber \\
= & \sqrt{n^{1+\alpha}p(1-p)}\frac{\left\lvert F_{n}(1)-\Phi(1)+F_{n}(-1)-\Phi(-1)-\int_{-1}^{1}\left(F_{n}(x)-\Phi(x)\right)dx\right\rvert }{\mathbb{P}\left(N\in I\right)}\nonumber \\
\le & \sqrt{n^{1+\alpha}p(1-p)}\frac{4\max_{x\in[-1,1]}\left\lvert F_{n}(x)-\Phi(x)\right\rvert }{\mathbb{P}(N\in I)}\nonumber \\
\le & \sqrt{n^{1+\alpha}p(1-p)}\frac{2/\sqrt{n^{1+\alpha}p(1-p)}}{\int_{-1}^{1}\exp\left(-\frac{x^{2}}{2}\right)dx/\sqrt{2\pi}-1/\sqrt{n^{1+\alpha}p(1-p)}}\nonumber \\
\le & \frac{3}{\int_{-1}^{1}\exp\left(-\frac{x^{2}}{2}\right)dx/\sqrt{2\pi}},\label{eq:N1mom1}
\end{align}
where $F_{n}$ is the distribution function of $(N-n^{1+\alpha}p)/\sqrt{n^{1+\alpha}p(1-p)}$,
while $\Phi$ is the one of the standard Gaussian. Likewise,
\begin{align}
\mathbb{E} & \left(\left\lvert N-n^{1+\alpha}p\right\rvert ^{r}\vert\boldsymbol{1}_{N\in I}=1\right)\nonumber \\
 & \ge\left(n^{1+\alpha}p(1-p)\right)^{r/2}\frac{\int_{-1}^{1}\lvert x\rvert^{r}d\Phi(x)-4\max_{x\in[-1,1]}\left\lvert F_{n}(x)-\Phi(x)\right\rvert }{\mathbb{P}(N\in I)}\nonumber \\
 & \ge\left(n^{1+\alpha}p(1-p)\right)^{r/2}\frac{\int_{-1}^{1}\lvert x\rvert^{r}d\Phi(x)-2\sqrt{\pi}/\sqrt{n^{1+\alpha}p(1-p)}}{\int_{-1}^{1}\exp\left(-\frac{x^{2}}{2}\right)dx+\sqrt{\pi}/\sqrt{n^{1+\alpha}p(1-p)}}\nonumber \\
 & \ge\left(n^{1+\alpha}p(1-p)\right)^{r/2}\frac{\int_{-1}^{1}\lvert x\rvert^{r}d\Phi(x)}{2\int_{-1}^{1}\exp\left(-\frac{x^{2}}{2}\right)dx}.\label{eq:N1mom2}
\end{align}
Next, (\ref{eq:N1mom}), (\ref{eq:N1mom1}) and (\ref{eq:N1mom2})
give
\begin{align}
 & \mathbb{M}_{r}\left(N_{1}\vert\boldsymbol{1}_{N_{1}\in I}=1\right)\nonumber \\
\ge & n^{\frac{r(1+\alpha)}{2}}\left\lvert \sqrt{p(1-p)}\left(\frac{\int_{-1}^{1}\lvert x\rvert^{r}d\Phi(x)}{2\int_{-1}^{1}\exp\left(-\frac{x^{2}}{2}\right)dx}\right)^{1/r}-\frac{3}{\int_{-1}^{1}\exp\left(-\frac{x^{2}}{2}\right)dx/\sqrt{2\pi}}\right\rvert ^{r}.\label{eq:N1momfinal}
\end{align}
For $\mathbb{M}_{r}\left(N\vert\boldsymbol{1}_{N\in I}=1\right)$
to dominate the first term $\mathbb{M}_{r}\left(N|\boldsymbol{1}_{N\in I}=1\right)-\ell(n)^{r}$
in (\ref{eq:condmomlbafterlip2}), the constant $c_{1}$ (which depends
on $r$ and $p$ but not $n$) is chosen such that:
\[
c_{1}(r)\le\sqrt{p(1-p)}\left(\frac{\int_{-1}^{1}\lvert x\rvert^{r}d\Phi(x)}{2\int_{-1}^{1}\exp\left(-\frac{x^{2}}{2}\right)dx}\right)^{1/r}.
\]
(Recall that $\ell(n)=c_{1}n^{(1+\alpha)/2}$). So,
\[
\mathbb{M}_{r}\left(N|\boldsymbol{1}_{N\in I}=1\right)-\ell(n)^{r}\ge n^{r(1+\alpha)/2}\left(\sqrt{p(1-p)}\left(\frac{\int_{-1}^{1}\lvert x\rvert^{r}d\Phi(x)}{2\int_{-1}^{1}\exp\left(-\frac{x^{2}}{2}\right)dx}\right)^{1/r}-c_{1}\right)^{r}.
\]
This last estimate combined with (\ref{eq:Berryesson}) and Lemma
\ref{lem:lbOn} gives
\begin{eqnarray*}
\mathbb{M}_{r}\left(L_{n}(N)\right) & \ge & \frac{c_{3}^{r}}{2^{r}n^{r\alpha}}\left(\mathbb{M}_{r}\left(N|\boldsymbol{1}_{N\in I}=1\right)-\ell(n)^{r}\right)\mathbb{P}\left(N\in I\right)\mathbb{P}\left(O_{n}\right)\\
 & \ge & \frac{c_{3}^{r}}{2^{r}n^{r\alpha}}\left(\frac{1}{2\sqrt{2\pi}}\int_{-1}^{1}\exp\left(-\frac{x^{2}}{2}\right)dx\right)\\
 &  & n^{r(1+\alpha)/2}\left(\sqrt{p(1-p)}\left(\frac{\int_{-1}^{1}\lvert x\rvert^{r}d\Phi(x)}{2\int_{-1}^{1}\exp\left(-\frac{x^{2}}{2}\right)dx}\right)^{1/r}-c_{1}\right)^{r}\\
 &  & \left(1-12p(1-p)n^{1+\alpha}\exp(-c_{2}n)+p(1-p)n^{1+\alpha}\exp\left(-\frac{c_{5}^{2}c_{6}\sqrt{p(1-p)}}{2C^{2}}n^{\frac{1-3\alpha}{2}}\right)\right)\\
 & = & \Theta\left(n^{(1-\alpha)r/2}\right).
\end{eqnarray*}

\section{Conclusions and Remarks}

The major limitation of our method is the upper bound $1/3$ on $\alpha$,
which stems from application of Hoeffding's classical martingale exponential
inequality. Specifically, we note there is some discrepancy between
the orders of the upper and lower bounds for the martingale differences
in (\ref{eq:martingaleroot}) conditioned on the event $O_{n}$, i.e.,
the conditional lower bound is of order $o(n^{-\alpha})$ compared
to the upper bound $o(1)$. With the existence of this discrepancy,
it takes exactly $\alpha<1/3$ to have the exponential concentration
hold. But a more sophisticated way of flipping weights from $lo$
mode to $hi$ mode in the construction of the martingale might be
produced to mitigate this so as to relieve the $1/3$ bound. Or even
better, a more powerful concentration inequality can be used to replace
Hoeffding's. 

However, even if our method is generalizable to the case when $\alpha=1$,
i.e., the grid is perfect square, the corresponding lower bound for
the variance will be $O(n^{1-\alpha=1})=O(1)$ and thus not useful.
Nevertheless, a well-known fact that geodesics in $DLPP$ are confined
to a cylinder centered on the main diagonal of the grid and of width
of order strictly smaller than $o(n)$ will help producing a non-trivial
lower bound. The typical order of the width of the cylinder is the
transversal fluctuation, which is believed to be $n^{2/3}$. Further,
it is also believed that there is exponentially high probability that
geodesics are confined to such kind of cylinder of width $o(n^{2/3+\epsilon})$,
for $\epsilon>0$. Actually it has been proved that the transversal
fluctuation exponent can be upper bounded by $3/4$ in the setting
of undirected first passage percolation in \cite{newman1995divergence}
and an exponential concentration holds for all the geodesics in a
cylinder of width $O(n^{(2\kappa+2)/(2\kappa+3)}\sqrt{\ln n})$ in
\cite{hx2015} in the current setting, both of which assume the finite
curvature exponent $\kappa$. This is equivalently to say that if
let $\tilde{L}_{n}$ be the last passage time within the cylinder,
then $\tilde{L}_{n}\ge L_{n}$ holds with exponentially high probability.
So 
\begin{align*}
\mathbb{E}\tilde{L}_{n}-\mathbb{E}L_{n} & =\mathbb{E}\left((\tilde{L}_{n}-L_{n})\left(\boldsymbol{1}_{\{\tilde{L}_{n}\ge L_{n}\}}+\boldsymbol{1}_{\{\tilde{L}_{n}<L_{n}\}}\right)\right)\\
 & =\mathbb{E}\left((\tilde{L}_{n}-L_{n})\boldsymbol{1}_{\{\tilde{L}_{n}<L_{n}\}}\right)\ge-2n\mathbb{P}(\tilde{L}_{n}<L_{n})\rightarrow0.
\end{align*}
Meanwhile, it is trivial that $\tilde{L}_{n}\le L_{n}$. So $\mathbb{E}\tilde{L}_{n}-\mathbb{E}L_{n}\rightarrow0$
exponentially fast. This shows the potential of bounding the variance
of $L_{n}$ by that of $\tilde{L}_{n}$. Indeed,
\begin{align*}
Var\left(L_{n}\right) & =Var\left(L_{n}-\mathbb{E}\tilde{L}_{n}\right)\\
 & =\mathbb{E}\left(L_{n}-\mathbb{E}\tilde{L}_{n}\right)^{2}-\left(\mathbb{E}L_{n}-\mathbb{E}\tilde{L}_{n}\right)^{2}\\
 & =\text{\ensuremath{\mathbb{E}}}\left(\left(L_{n}-\mathbb{E}\tilde{L}_{n}\right)\left(\boldsymbol{1}_{\{\tilde{L}_{n}\ge L_{n}\}}+\boldsymbol{1}_{\{\tilde{L}_{n}<L_{n}\}}\right)\right)-\left(\mathbb{E}L_{n}-\mathbb{E}\tilde{L}_{n}\right)^{2}\\
 & =\mathbb{E}\left(\left(\tilde{L}_{n}-\mathbb{E}\tilde{L}_{n}\right)^{2}\boldsymbol{1}_{\{\tilde{L}_{n}\ge L_{n}\}}\right)+\mathbb{E}\left(\left(L_{n}-\mathbb{E}\tilde{L}_{n}\right)\boldsymbol{1}_{\{\tilde{L}_{n}<L_{n}\}}\right)-\left(\mathbb{E}L_{n}-\mathbb{E}\tilde{L}_{n}\right)^{2}\\
 & =Var(\tilde{L}_{n})+\mathbb{E}\left(\left(\left(L_{n}-\mathbb{E}\tilde{L}_{n}\right)^{2}-\left(\tilde{L}_{n}-\mathbb{E}\tilde{L}_{n}\right)^{2}\right)\boldsymbol{1}_{\{\tilde{L}_{n}<L_{n}\}}\right)-\left(\mathbb{E}L_{n}-\mathbb{E}\tilde{L}_{n}\right)^{2}\\
 & \ge Var(\tilde{L}_{n})-8n^{2}\mathbb{P}(\tilde{L}_{n}<L_{n})-\left(\mathbb{E}L_{n}-\mathbb{E}\tilde{L}_{n}\right)^{2}.
\end{align*}
Symmetrically, it is also true that $VarL_{n}\le Var\tilde{L}_{n}+8n^{2}\mathbb{P}(\tilde{L}_{n}<L_{n})-\left(\mathbb{E}L_{n}-\mathbb{E}\tilde{L}_{n}\right)^{2}.$
So the variances of $L_{n}$ and $\tilde{L}_{n}$ share the same asymptotic
order. On the other hand, our method here for the thin rectangle applies
to the cylinder of the length $O(n)$ and the width $O(n^{\alpha})$
with slight modification. This will produce a power lower bound $n^{1-\alpha}$.
Considering the best the scenario, if exponential concentration for
the width $n^{2/3+\epsilon}$ for any $\epsilon>0$ can be proved,
the corresponding power lower bound for longitudinal fluctuation will
be $n^{1-2/3-\epsilon}=n^{1/3-\epsilon}$. Although this is still
not the tight conjectured bound $n^{2/3}$, it still serves as a good
power lower bound.

\bibliographystyle{plain}
\phantomsection\addcontentsline{toc}{section}{\refname}\bibliography{percolation}

\end{document}